 \newtheorem{theorem}{Theorem}[section]
 \newtheorem*{theorem*}{Theorem}
 \newtheorem*{lemma*}{Lemma}
 \newtheorem{proposition}[theorem]{Proposition}
 \newtheorem{fact*}{Fact}
 \newtheorem{lemma}[theorem]{Lemma}
\theoremstyle{definition}
 \newtheorem{definition}[theorem]{Definition}
 \newtheorem{remark}[theorem]{Remark}
 \newtheorem*{remark*}{Remark}
 \newtheorem{example}[theorem]{Example}
\numberwithin{equation}{section}
\newcommand{\vect}[1]{\boldsymbol{#1}}
\newcommand{\R}{\boldsymbol{R}}
\newcommand{\rank}{\operatorname{rank}}
\newcommand{\Hess}{\operatorname{Hess}}
\newcommand{\image}{\operatorname{image}}
\renewcommand{\phi}{\varphi}
\newcommand{\sign}{\operatorname{sgn}}
\newcommand{\inner}[2]{\left\langle{#1},{#2}\right\rangle}
\newcommand{\E}{\mathcal{E}}
\newcommand{\ep}{\varepsilon}
\newcommand{\zv}{\vect{0}}
\newcommand{\e}{\vect{e}}
\newcommand{\x}{\vect{x}}
\newcommand{\pmt}[1]{{\begin{pmatrix} #1  \end{pmatrix}}}
\newcommand{\resp}[1]{$($resp. {#1}$)$}
\newcommand{\mycomment}[1]{}
\begin{document}
\begin{center}
{\large 
{\bf Criteria for $D_4$ singularities of wave fronts}}
\\[5mm]
\today
\\[5mm]
Kentaro Saji
\\[7mm]
{\bf Abstract}
\end{center}
\begin{quote}
{\small We give useful and simple
criteria for determining $D_4^\pm$ singularities
of wave fronts.
As an application,
we investigate behaviors of singular curvatures of cuspidal edges
near $D_4^+$ singularities.}
\end{quote}
\section{Introduction}
In this paper,
we study criteria
for determining $D_4^\pm$ singularities.

A generic classification of
singularities of wave fronts
was given by
Arnol'd
and Zakalyukin.
They showed that the generic singularities of 
wave fronts in $\R^3$
are cuspidal edges and swallowtails.
Moreover, they showed that the generic singularities of
one-parameter bifurcation of wave fronts are
cuspidal lips, cuspidal beaks, cuspidal butterflies and
$D_4^\pm$ singularities (see \cite{AGV}).
Classifications of further degenerate singularities
have been considered by many authors 
(see \cite{AGV,arn-normal, arn-morse,garay,zakalag} for example).

To state the next theorem,
we define some terms here.
The unit cotangent bundle $T^*_1\R^{n+1}$ of  $\R^{n+1}$
has a canonical contact structure
and can be identified with the unit tangent bundle
$T_1\R^{n+1}$.
Let $\alpha$ denote the canonical contact form on it.
A map $i:M\to T_1\R^{n+1}$ is said to be
{\em isotropic\/} if
$\dim M=n$ and
the pull-back $i^*\alpha$ vanishes identically.
An isotropic immersion is called a {\em Legendrian immersion\/}.
We call 
the image of $\pi\circ i$
the {\em wave front set\/}
 of $i$,
where $\pi:T_1\R^{n+1}\to\R^{n+1}$ is the canonical projection.
We denote by $W(i)$ the wave front set of $i$.
Moreover, $i$ is called the {\em Legendrian lift\/} of $W(i)$.
With this framework, we define the notion of fronts as follows:
 A map-germ $f:(\R^n,\zv) \to (\R^{n+1},\zv)$ is called a
 {\em wave front\/} or a {\em front\/}
 if there exists a unit vector field $\nu$ of $\R^{n+1}$ along $f$
 such that
  $L=(f,\nu):(\R^n,\zv)\to (T_1\R^{n+1},\zv)$ is
 a Legendrian immersion (cf. \cite{AGV}, see also \cite{krsuy}).

The main result of this paper is as follows:
\begin{theorem}
 \label{thm:r3}
 Let\/ $f(u,v):(\R^2,\zv)\to(\R^3,\zv)$ be a front and\/
 $(f,\nu)$ its Legendrian lift. The germ\/ $f$ at\/ $\zv$ is 
 a $D_4^+$ singularity
 $($resp. $D_4^-$ singularity\/$)$ if and only if the following\/
 two conditions hold.
 \begin{itemize}
 \item[(a)] The rank of the differential 
            map $df_{\zv}$ is equal to zero.
 \item[(b)] $\det(\Hess
            \lambda(\zv))<0$ \resp{$\det(\Hess\lambda(\zv))>0$},
 \end{itemize}
 where\/ 
 $\lambda(u,v)
 =
 \det(f_u,f_v,\nu)$,
 $f_u=df(\partial/\partial u)$, $f_v=df(\partial/\partial v)$
 and\/ $\det(\Hess\lambda(\zv))$ means the determinant of
 the Hessian matrix of\/ $\lambda$ at\/ $\zv$.
\end{theorem}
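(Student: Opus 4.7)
The plan is to combine $\A$-invariance of the two conditions with the Arnol'd--Zakalyukin classification of Legendrian singularities. First I would verify that both conditions are well defined on $\A$-equivalence classes: condition (a) is obviously invariant, and for (b), a source diffeomorphism with Jacobian $J$ and an orientation-preserving target change which multiplies $\lambda$ by a positive factor $D$ together rescale $\lambda$ globally by $J\cdot D$; since (a) forces $\zv$ to be a critical point of $\lambda$, the Hessian at $\zv$ picks up a factor $(J(\zv)D(\zv))^2>0$, so the sign of $\det\Hess\lambda(\zv)$ is an invariant.

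For the ``only if'' direction, I would fix explicit parametric normal forms of the $D_4^\pm$ front germs---these can be built as the big wave fronts of the versal unfoldings of the function-germs $q_1^3 - q_1 q_2^2$ and $q_1^3 + q_2^3$, together with their canonical unit normal vector fields---and verify (a) and the sign of $\det\Hess\lambda(\zv)$ by direct differentiation. Together with the invariance established above, this settles necessity.

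For the ``if'' direction, which is the main content, I would invoke generating families. The Legendrian lift $(f,\nu)$ is locally the wave front of a family $F(q_1,q_2;x)$, and its Legendrian class (hence the $\A$-class of $f$) is determined by the $\mathcal{R}^+$-class of $F_0(q)=F(q;\zv)$. Condition (a) forces $F_0$ to have corank $2$ at $0$, so $j^2 F_0 = 0$ and $j^3 F_0$ is a binary cubic form $C(q)$. A computation relating $\lambda$ to the Hessian determinant of $F$ in the fiber variables $q$ identifies $\det\Hess\lambda(\zv)$ with a positive multiple of the discriminant of $C$, so (b) translates to $C$ being non-degenerate with the sign prescribed by the theorem. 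This places $F_0$ in $\mathcal{R}^+$-class $D_4^\pm$; since the Legendrian immersion hypothesis on $(f,\nu)$ makes $F$ a versal unfolding of $F_0$, the classical recognition theorem identifies $(f,\nu)$ with the standard $D_4^\pm$ Legendrian germ.

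The principal obstacle is the computation linking $\det\Hess\lambda(\zv)$ to the discriminant of the cubic part of $F_0$. Because $\lambda$ is defined from the parametrization $f$ while $F$ is related to $f$ only implicitly via $\partial F/\partial q_i=0$, expressing $j^2\lambda(\zv)$ in terms of $j^3 F_0(0)$ requires either a jet-level implicit function argument or a well-chosen adapted coordinate system in which $\nu$ plays the role of a chart (this is possible because (a) forces $d\nu_\zv$ to be injective, via the Legendrian immersion property). Once this identification is in hand, the remaining steps are standard singularity theory.
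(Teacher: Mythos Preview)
Your outline is close in spirit to the paper's proof---both pass through a generating family and reduce the recognition of $D_4^\pm$ to the sign of the discriminant of the cubic jet---but the versality step contains a genuine gap.

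You assert that ``the Legendrian immersion hypothesis on $(f,\nu)$ makes $F$ a versal unfolding of $F_0$.'' This is not correct here. The Legendrian immersion condition is equivalent to $F$ being a \emph{Morse family of hypersurfaces}, i.e., to $(F,F_{q_1},F_{q_2})$ being a submersion; it does not imply ${\mathcal K}$-versality. For $F_0\sim u^3\pm uv^2$ one has $\dim_{\R}{\mathcal E}_2/\langle F_0,(F_0)_u,(F_0)_v\rangle=4$, so a ${\mathcal K}$-versal unfolding requires four parameters. The generating family of a front in $\R^3$ carries only the three target coordinates as parameters, so it \emph{cannot} be versal. This is exactly why $D_4^\pm$ is not a generic singularity of fronts $(\R^2,\zv)\to(\R^3,\zv)$ but appears only in one-parameter bifurcations; the paper's $4$-dimensional criterion (Theorem~\ref{thm:r4}) does proceed via a genuine versality argument, and there an extra condition~(3) is needed precisely to secure versality.

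The paper repairs this with Lemma~\ref{lem:key}: using Zakalyukin's lemma on fibred diffeomorphisms of the discriminant of the versal unfolding ${\mathcal V}$, it shows that \emph{every} three-parameter Morse-family unfolding of $u^3\pm uv^2$ whose discriminant has dense regular set is $P$-${\mathcal K}$-equivalent to the particular non-versal unfolding ${\mathcal V}_0=u^3\pm uv^2+xu+yv+z$, whose discriminant is the standard $D_4^\pm$ front. Without this (or an equivalent substitute) your ``if'' direction is incomplete: knowing that $F_0$ is right equivalent to $u^3\pm uv^2$ and that $F$ is a Morse family does not by itself pin down the diffeomorphism type of the discriminant. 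Incidentally, the computation you flag as the ``principal obstacle''---identifying $\det(\Hess\lambda(\zv))$ with (minus) the cubic discriminant $\Delta_\phi$---is carried out explicitly in the paper using the concrete generating family $\Phi=\inner{f-(x,y,z)}{\nu}$ and is comparatively routine.
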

Since our criteria 
require only the Taylor coefficients
of the given germ, 
this can be
useful for identifying the $D_4^\pm$ singularities
on explicitly parameterized maps.

Criteria for other singularities of fronts
were obtained in \cite{fsuy,is,horo,krsuy,suy3}.
Recently, several applications of
these criteria were obtained in various situations
\cite{ishi-machi,horo,circular,kruy,mu,sch,front}.

This paper is organized as follows.
In Section 2 we give fundamental notions
and state criteria for $4$-dimensional $D_4^\pm$ singularities
(Theorem \ref{thm:r4}).
In Section 3 we prove Theorem \ref{thm:r4}, and
in Section 4 we prove Theorem \ref{thm:r3}.
In Section 5 we apply Theorem \ref{thm:r3}
to the normal forms of the $D_4^\pm$ singularities,
which
confirms one direction of the conclusion of Theorem\/ $\ref{thm:r3}$,
since the conditions in Theorem\/ $\ref{thm:r3}$
are independent of the right-left equivalence.
In Section 6 as an application of Theorem \ref{thm:r3},
we study the singular curvatures of four cuspidal edges near
a ``generic'' $D_4^+$ singularity.

The author would like to express his sincere gratitude
to Toshizumi Fukui,
Peter Giblin, Shyuichi Izumiya, Toru Ohmoto, Wayne Rossman
and Masaaki Umehara
for fruitful discussions and helpful comments.

All maps considered here are of class $C^\infty$.

\section{Fundamental notions}
\label{sec:pre}
Let $f(u_1\ldots,u_n):(\R^n,\zv)\to(\R^{n+1},\zv)$ be a front and
$L=(f,\nu):(\R^n,\zv)\to (T_1\R^{n+1},\zv)$
its Legendrian lift.
The isotropicity of $L$ is
equivalent to the orthogonality condition
\begin{equation*}
 \label{eq:orthogonal}
 \inner{df(X_p)}{\nu(p)}=0 \qquad
  \left(X_p\in T_p\R^n,\  p\in\R^n\right),
\end{equation*}
where $\inner{~}{~}$ is the Euclidean inner product.
The vector field  $\nu$ is called the {\em unit normal vector field\/} of
the front $f$.
For a front
$f:(\R^n,\zv)\to(\R^{n+1},\zv)$,
a function
\begin{equation}
 \label{eq:signed-area-density}
   \lambda(u_1,\ldots,u_n)=
   \det(f_{u_1},\ldots,f_{u_n},\nu)(u_1,\ldots,u_n)
\end{equation}
is called the {\em signed volume density function\/} of $f$,
where $f_{u_i}=df(\partial/\partial u_i)$, $(i=1,\ldots,n)$.
The set of singular points $S(f)$ of $f$ coincides with
the zeros of $\lambda$.
If $n=3$ and the rank of $df_{\zv}$
is equal to $1$, then there exist vector fields
$\tau, \xi, \eta$ such that 
$\xi_{\zv}$ and\/ $\eta_{\zv}$ generate the kernel of\/
             $df_{\zv}$, and\/
             $\tau_{\zv}$ is transverse to\/ $\ker(df_{\zv})$.
\begin{definition}
 \label{def:eq}
 Two map-germs $f_1,f_2:(\R^n,\zv)\to(\R^m,\zv)$
 are {\em right-left equivalent}\/
 if
 there exist diffeomorphisms $S:(\R^n,\zv)\to(\R^n,\zv)$
 and $T:(\R^m,\zv)\to(\R^m,\zv)$
 such that
 $
 f_2\circ S=T\circ f_1
 $
 holds. If one can take $T$ to be the identity,
 the two map-germs are called {\em right equivalent}.
\end{definition}
\begin{definition}
 \label{def:sing}
 \mycomment{A {\em cuspidal edge\/} is a
 map-germ right-left equivalent to
 $(u,v)\mapsto$\\$(u,v^2,v^3)$ at $\zv$.}
 \hfill A\hfill  {\em cuspidal\hfill edge\/}\hfill is\hfill a\hfill
 map-germ\hfill right-left\hfill equivalent\hfill to\hfill
 $(u,v)\mapsto$\\$(u,v^2,v^3)$ at $\zv$.
 A {\em swallowtail\/} is a map-germ right-left equivalent to
 $(u,v)\mapsto(u,3v^4+uv^2,4v^3+2uv)$ at $\zv$.
 A map-germ right-left equivalent to
 $(u,v)\mapsto(uv, u^2+3\ep v^2,u^2v+\ep v^3)$
 at $\zv$ is called a $D_4^+$ {\em singularity\/}
 if $\ep=1$
 (resp. a $D_4^-$ {\em singularity\/} if $\ep=-1$)
 (see Figure \ref{fig:dfour},
 where the left-hand figure is the $D_4^+$ singularity
 and the right-hand figure is the $D_4^-$ singualrity).
 A map-germ $(\R^3,\zv)\to(\R^4,\zv)$ right-left 
 equivalent to
 $(u,v,t)\mapsto(uv, u^2+2tv\pm 3v^2,2u^2v+tv^2\pm 2v^3,t)$
 at $\zv$ is called a
 {\em $4$-dimensional\/ $D_4^\pm$ singularity\/}, respectively.
\end{definition}
\begin{figure}[htbp]
 \centering
 \begin{tabular}{ccc}
  \includegraphics[width=.25\linewidth,bb=14 14 361 367]{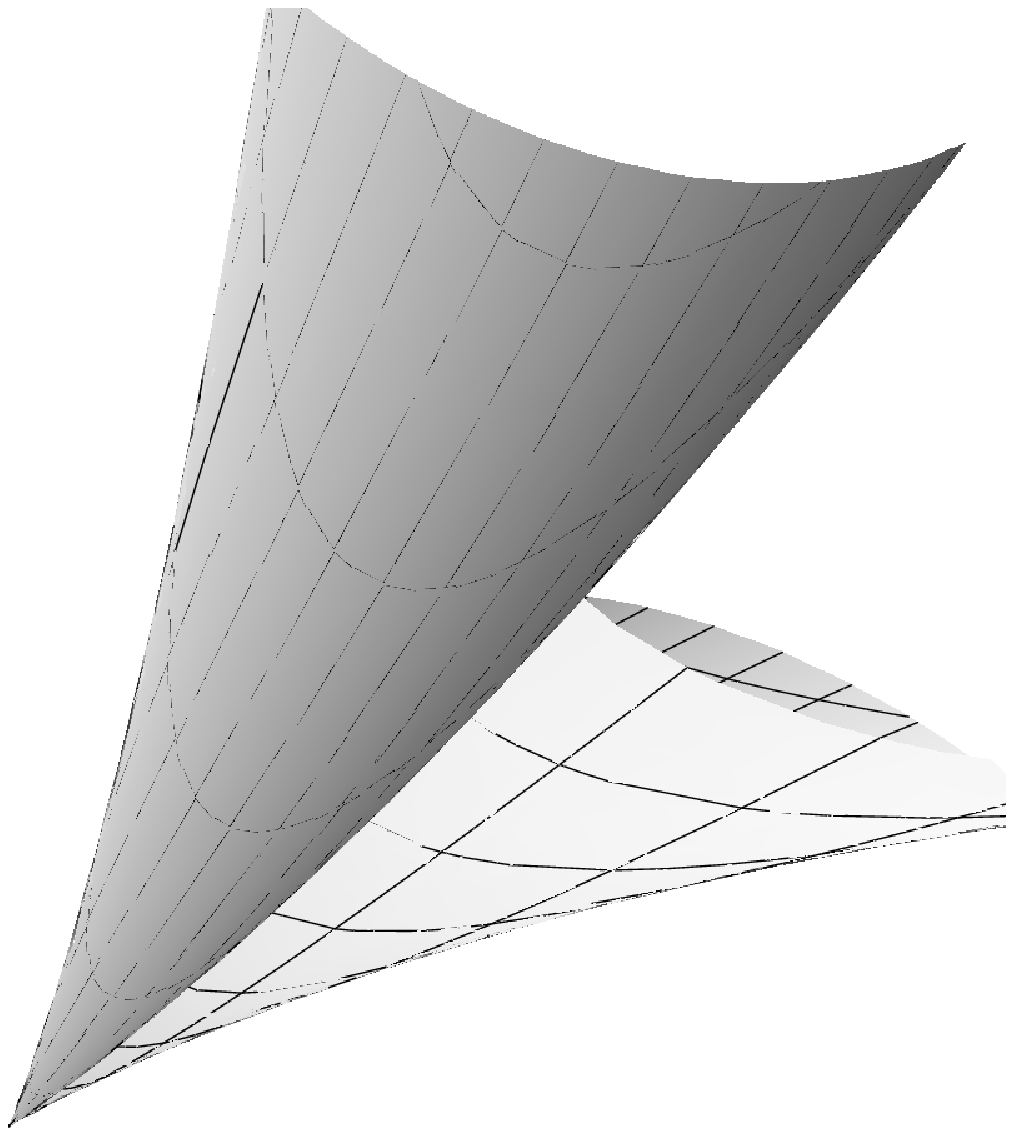}&
  \hspace{30mm}&
  \includegraphics[width=.25\linewidth,bb=14 14 249 225]{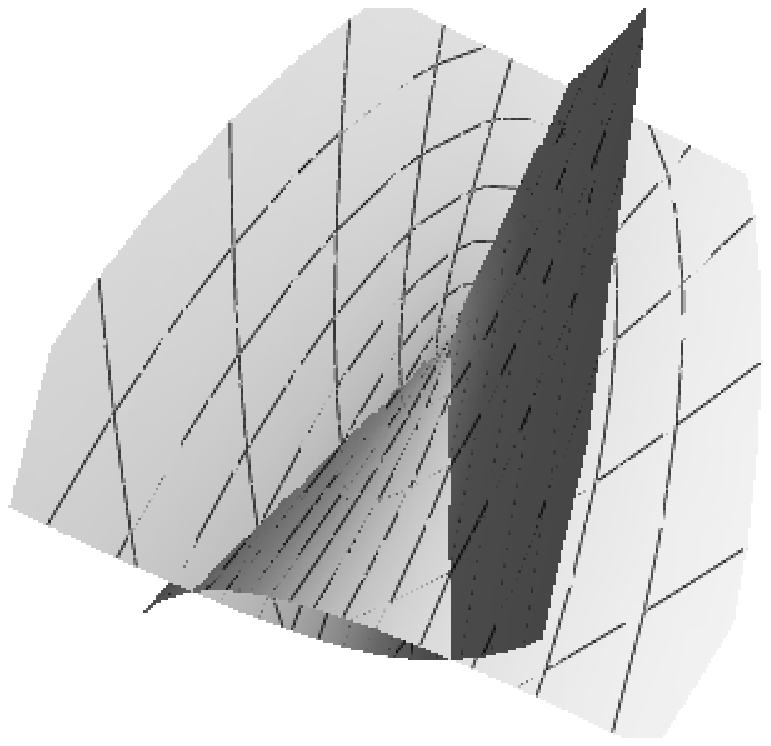}
 \end{tabular}
 \caption{The $D_4^\pm$ singularities}
 \label{fig:dfour}
\end{figure}

Since $D_4^\pm$ singularities
 appear as generic singularities of fronts
$(\R^3,\zv)\to(\R^4,\zv)$,
Theorem \ref{thm:r3} is based on the following theorem:
\begin{theorem}
 \label{thm:r4}
 Let\/ $f(u,v,t):(\R^3,\zv)\to(\R^4,\zv)$ be a front and\/
 $\nu$ its unit normal vector.
 The germ\/ $f$ at\/ $\zv$ is a\/ $4$-dimensional\/ 
 $D_4^+$ singularity $($resp. $4$-dimensional\/ $D_4^-$ singualrity$)$
 if and only if the following
 three conditions hold.
 \begin{itemize}
  \item[(1)] The rank of the differential 
            map\/ $df_{\zv}$ is equal to one.
  \item[(2)] $\det(\Hess_{(\xi,\eta)}
            \lambda(\zv))<0$ \resp{$\det(\Hess_{(\xi,\eta)}
            \lambda(\zv))>0$}, 
            where\/
            the\/ $2\times 2$ matrix\/
            $\Hess_{(\xi,\eta)}\lambda$ is the Hessian matrix with respect to\/
            ${\xi}$ and\/ ${\eta}$ of\/ $\lambda$.
            Here, $\xi$ and $\eta$ are vector fields on $\R^3$
            that generate the kernel of\/
             $df_{\zv}$.
 \item[(3)] The map-germ\/ 
            $$\big(\inner{df(\xi)}{d\nu(\xi)},
             \inner{df(\xi)}{d\nu(\eta)},
             \inner{df(\eta)}{d\nu(\eta)}\big)
            :(\R^3,\zv)\to(\R^3,\zv)$$ is an immersion at\/ $\zv$.
 \end{itemize}
 \end{theorem}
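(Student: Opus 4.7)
I would prove necessity and sufficiency separately. The necessity direction is comparatively routine: first I would verify that conditions (1)--(3) are invariants of the right-left equivalence of fronts, independent of the choices of unit normal $\nu$ and kernel frame $\xi,\eta$, and preserved under source and target diffeomorphisms. The rank of $df_\zv$ is manifestly invariant. For (2), since $f_u$ and $f_v$ both vanish at $\zv$, a direct expansion shows that $\lambda \in \mathfrak{m}^2$ at $\zv$; hence the Hessian of $\lambda$ at $\zv$ is a well-defined symmetric bilinear form on $T_\zv\R^3$, and its restriction to $\ker df_\zv$ has a determinant whose sign is intrinsic (bases of the kernel differ by a factor with nonzero determinant, which gets squared; sign changes of $\nu$ negate a $2\times 2$ matrix and leave the determinant unchanged). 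A similar bilinear analysis handles (3). It then suffices to verify (1)--(3) directly on the normal form $f_0(u,v,t) = (uv,\ u^2 + 2tv \pm 3v^2,\ 2u^2v + tv^2 \pm 2v^3,\ t)$ with any choice of unit normal, which is a finite computation.

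For sufficiency, my plan is to normalize $f$ via successive coordinate changes. Using (1), after linear changes on source and target I may assume $\ker df_\zv = \spann{\partial_u,\partial_v}$ and $df_\zv(\partial_t) = e_4$; a subsequent target diffeomorphism then brings $f$ to the form $(f_1,f_2,f_3,t)$ with each $f_i$ vanishing to second order at $\zv$. In these coordinates $\lambda$ and its Hessian become explicit expressions in the $2$-jet of $(f_1,f_2,f_3)$ paired with $\nu(\zv)$. The nondegeneracy provided by (2) then permits, via source changes in $(u,v)$ and linear target changes in $(x_1,x_2,x_3)$, reduction of the $2$-jet of $(f_1,f_2,f_3)$ to that of $f_0$, namely $(uv,\ u^2\pm 3v^2,\ 0)$.

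At the next order, condition (3) becomes the statement that a certain matrix built from the $2$- and $3$-jets of $(f_1,f_2,f_3)$ has full rank at $\zv$. Exploiting this, I would apply further source changes of the form $t\mapsto t+q(u,v)$ with $q\in\mathfrak{m}^2$, together with cubic target changes that absorb terms of the form $x_4^k$ or $x_4\cdot(\text{quadratic in } x_1,x_2)$ into $x_1,x_2,x_3$, to bring the $3$-jet of $f$ into coincidence with that of $f_0$ while preserving the normalizations achieved in the preceding step.

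The main obstacle is the passage from $3$-jet equivalence to germ equivalence. I would handle this by invoking the right-left finite determinacy of the $4$-dimensional $D_4^\pm$ normal forms: once the $3$-jet of $f$ agrees with that of $f_0$, finite determinacy, verified through the Mather--Wall criterion applied to the tangent space of the $\A$-orbit of the $3$-jet inside the appropriate jet space, produces the required source and target diffeomorphisms and completes the proof.
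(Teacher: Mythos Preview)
Your approach is viable in outline but takes a genuinely different route from the paper's.  The paper does not normalize the jet of $f$ directly or invoke $\mathcal{A}$-finite-determinacy of the map germ $f_0$.  Instead, it works through generating families: after normalizing coordinates so that $f=(f_1,f_2,f_3,t)$ with $d(f_1,f_2,f_3)_{\zv}=\zv$ and $\nu,\nu_u,\nu_v$ orthonormal at $\zv$, it sets
\[
\Phi(u,v,x,y,z,t)=\inner{f(u,v,t)-(x,y,z,t)}{\nu(u,v,t)},\qquad \phi(u,v)=\Phi(u,v,\zv),
\]
and shows that the discriminant set $\mathcal{D}_\Phi$ is exactly $\image f$.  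Condition~(2) is then translated, via an explicit identity, into the statement that the \emph{function} germ $\phi$ is right equivalent to $u^3\pm uv^2$ (one finds $\det\Hess_{(\xi,\eta)}\lambda(\zv)=-\Delta_\phi$, the cubic discriminant of $j^3\phi$); condition~(3) is translated into $\mathcal{K}$-versality of $\Phi$ as an unfolding of $\phi$.  Since any $\mathcal{K}$-versal unfolding of $u^3\pm uv^2$ is $P$-$\mathcal{K}$-equivalent to the standard one, and $P$-$\mathcal{K}$-equivalent Morse families with dense regular sets have diffeomorphic discriminants, $\image f$ is diffeomorphic to the discriminant of the standard versal unfolding, which is precisely the $4$-dimensional $D_4^\pm$ normal form.

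The paper's route reduces everything to the classical right classification and $3$-determinacy of the \emph{function} germ $u^3\pm uv^2$, together with standard Legendrian machinery, and never needs the $\mathcal{A}$-determinacy of the corank-two map germ $f_0:(\R^3,\zv)\to(\R^4,\zv)$.  Your route is more hands-on, but its final step---$3$-$\mathcal{A}$-determinacy of $f_0$---is a nontrivial tangent-space computation you have only asserted, and your $2$- and $3$-jet normalizations need more care than indicated (e.g.\ the $2tv$-term in the second component must be produced, and you must use the front condition $\inner{f_{uu}}{\nu}=\inner{f_{uv}}{\nu}=\inner{f_{vv}}{\nu}=0$ at $\zv$ to control the span of the second partials).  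Both approaches can be made to work; the paper's has the conceptual advantage that the roles of (2) and (3) become transparent: (2) identifies the organizing center of the generating family, and (3) is exactly its versality.
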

Note that 
$\Hess_{(\xi,\eta)}\lambda(\zv)$ is symmetric,
since $\xi$ and $\eta$ belong to the kernel of $df_{\zv}$.
\begin{remark}
 In Theorem \ref{thm:r3},
 the condition corresponding to $(3)$ in Theorem \ref{thm:r4}
 is  not needed, and is
 not used in the theorem's proof.
 On the other hand, if a front $f:(\R^2,\zv)\to(\R^3,\zv)$
 satisfies the conditions $(a)$ and $(b)$ of Theorem \ref{thm:r3},
 then the map 
 $(h_{11},h_{12},h_{22}):(\R^2,\zv)\to(\R^3,\zv)$ is an immersion,
 where $h_{11},h_{12},h_{22}$ are the components of
 the second fundamental form of $f$.
 See \cite{suyojm} for relations between $A_k$ singularities
 and the second fundamental form of fronts.
 \mycomment{in n=2 case, (L,M,N) is not immersion,
 then $f_uu,f_uv,f_vv$ are all parallel.
 Then $\hess\lambda=O$.}
\end{remark}

\section{Proof of Theorem \ref{thm:r4}}
In this section, we prove Theorem \ref{thm:r4}.
First, we show that the conditions in Theorem \ref{thm:r4}
do not depend on the choices of vector fields and the coordinate systems.
\subsection{Well-definedness of the conditions}
It is easy to see that
the conditions in Theorem \ref{thm:r4} are
independent of the choices of vector fields
on the source.
We show that they are independent of the choice of
coordinate system on the target as well.

We take a diffeomorphism $T:(\R^4,\zv)\to(\R^4,\zv)$.
The differential map $dT$ can be considered as a $GL(4,\R)$-valued
function $q\mapsto dT_{q}$.
Since $A\x_1\wedge A\x_2 \wedge A\x_3
=
(\det A)\,{}^t{A^{-1}}(\x_1\wedge \x_2\wedge \x_3)$ 
holds for any
vectors $\x_1,\x_2,\x_3\in\R^4$ and any non-singular
matrix $A$, we can take
\begin{equation}
\label{eq:nutrans}
\tilde\nu=\frac{1}{\delta}\,{}^t(dT_{f})^{-1}\nu,\qquad
\delta=\sqrt{\inner{{}^t(dT_f)^{-1}\nu}{{}^t(dT_f)^{-1}\nu}}
\end{equation}
as the unit normal vector of $T\circ f$.
Using \eqref{eq:nutrans},
we can easily see that
the conditions (2) and (3) of Theorem \ref{thm:r4} are
independent of the choice of coordinate system on
the target by noticing that $(dT_f)_u=(dT_f)_v=O$ holds at $\zv$
if $df_{\zv}(\partial/\partial u)=
df_{\zv}(\partial/\partial v)=\zv$.
\subsection{
Criteria for a function to be right equivalent to $u^3\pm uv^2$}
\label{sec:criteria-d4}
\mycomment{In the criteria for $A_k$ singularities of fronts,
the criteria for $A_k$-type singularities of functions
play the crucial role.}
In order to show Theorem \ref{thm:r4},
criteria for a function to be right equivalent
to the function $u^3\pm uv^2$
play the crucial role.
For a function-germ $\phi:(\R^2,\zv)\to(\R,0)$,
we define the following number:
\begin{equation}
\label{eq:cubdiscri}
\begin{array}{l}
    \Delta_{\phi}=
    \big((\phi_{uuu})^2(\phi_{vvv})^2
    -6\phi_{uuu}\phi_{uuv}\phi_{uvv}\phi_{vvv}
    -3(\phi_{uuv})^2(\phi_{uvv})^2\\
    \hspace{70mm}
     +4(\phi_{uuv})^3\phi_{vvv}
   +4\phi_{uuu}(\phi_{uvv})^3\big)(0,0).
\end{array}
\end{equation}
Then the following lemma holds:
\mycomment{
This function gives us the conditions that 
a function is right equivalent
to the function $u^3\pm uv^2$ as follows.}
\begin{lemma}
 \label{lem:funcd4}
 Let\/ $\phi:(\R^2,\zv)\to(\R,0)$ be a function satisfying\/
  $j^2\phi=0$.
 Then\/ $\phi$ at\/ $\zv$ is right equivalent to\/ $(u^3+uv^2)$
 $($resp. $(u^3-uv^2))$
 if and only if\/ $\Delta_{\phi}>0$ 
 \resp{$\Delta_{\phi}<0$}.
\end{lemma}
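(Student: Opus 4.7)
The plan is to identify $\Delta_\phi$ as (a nonzero constant multiple of) the classical discriminant of the 3-jet $j^3\phi$ viewed as a binary cubic form, and then reduce the lemma to (a) the classification of nondegenerate real binary cubic forms and (b) the finite determinacy of $u^3 \pm uv^2$.

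First I would write $j^3\phi(u,v) = \frac{1}{6}\phi_{uuu}u^3 + \frac{1}{2}\phi_{uuv}u^2v + \frac{1}{2}\phi_{uvv}uv^2 + \frac{1}{6}\phi_{vvv}v^3$, and verify by a direct substitution into the classical formula $\operatorname{disc}(Au^3+Bu^2v+Cuv^2+Dv^3) = B^2C^2-4AC^3-4B^3D+18ABCD-27A^2D^2$ that
\[
  \Delta_\phi = -48\,\operatorname{disc}(j^3\phi).
\]
Since $-48<0$, the sign of $\Delta_\phi$ coincides with the sign of $-\operatorname{disc}(j^3\phi)$; in particular $\Delta_\phi\neq 0$ is equivalent to $j^3\phi$ being a nondegenerate binary cubic form.

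Next I would invoke the classification of nondegenerate real binary cubic forms under the $GL(2,\mathbb{R})$-action: those with negative classical discriminant have three distinct real linear factors and are linearly equivalent to $u(u-v)(u+v) = u^3 - uv^2$, while those with positive classical discriminant have one real and two complex conjugate factors and are linearly equivalent to $u(u^2+v^2) = u^3 + uv^2$. Translating into our normalization, $\Delta_\phi > 0$ (resp.\ $\Delta_\phi < 0$) exactly corresponds to $j^3\phi$ being $GL(2,\mathbb{R})$-equivalent to $u^3+uv^2$ (resp.\ $u^3-uv^2$). Because $j^2\phi = 0$, any coordinate change $S$ on the source with $dS(0)=L$ satisfies $j^3(\phi\circ S) = (j^3\phi)\circ L$ (the higher-order part of $S$ contributes only to the $4$-jet and beyond), so after a linear change of coordinates we may assume $j^3\phi = u^3 \pm uv^2$.

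The final step is the finite determinacy of $\phi_\pm := u^3 \pm uv^2$ under right equivalence. I would apply Mather's criterion: it suffices to check $\mathfrak{m}^4 \subset \mathfrak{m}^2\cdot J(\phi_\pm)$, where $J(\phi_\pm) = (3u^2\pm v^2,\,2uv)$. A short computation shows that $\mathfrak{m}^2 J(\phi_\pm)$ contains $2u^3v$, $2u^2v^2$, $2uv^3$, $3u^4 \pm u^2v^2$, $3u^3v \pm uv^3$, and $3u^2v^2 \pm v^4$, from which one extracts $u^4,u^3v,u^2v^2,uv^3,v^4$ by linear combinations. Hence $\phi_\pm$ is $3$-determined, so $\phi$ is right equivalent to its $3$-jet, completing the ``if'' direction. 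The ``only if'' direction follows because $\Delta_\phi$ has the same sign as $\Delta_{\phi_\pm}$ on any right-equivalence class (the sign being invariant under linear reparametrizations of the source, as the discriminant transforms by $(\det L)^{6}>0$), and a direct calculation gives $\Delta_{u^3+uv^2}>0$, $\Delta_{u^3-uv^2}<0$.

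The main obstacle is the reduction step: we need sign-invariance of $\Delta_\phi$ under the full group of right equivalences (not just linear ones), and we need to know that $3$-determinacy is strong enough to overcome the fact that the normalization of $j^3\phi$ might not extend to a normalization of $\phi$ itself. Both issues dissolve because $j^2\phi=0$ forces the $3$-jet to transform purely by the linear part of the diffeomorphism, and Mather's criterion gives exactly the required finite determinacy.
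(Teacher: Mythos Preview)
Your approach is essentially the paper's own, fleshed out with details the paper leaves implicit: identify $\Delta_\phi$ with (a negative multiple of) the discriminant of the binary cubic $j^3\phi$, classify real binary cubics by the sign of the discriminant, and then invoke $3$-determinacy of $u^3\pm uv^2$ (the paper just says ``a standard method'' and ``it is known'', whereas you verify Mather's criterion explicitly). One slip to fix: you have the classical sign convention backwards---for the discriminant formula you wrote, \emph{positive} discriminant corresponds to three distinct real linear factors (e.g.\ $u^3-uv^2$ has $\operatorname{disc}=4$) and \emph{negative} discriminant to one real and two complex conjugate factors; your final conclusion $\Delta_\phi>0 \Leftrightarrow u^3+uv^2$ is nonetheless correct precisely because $\Delta_\phi=-48\operatorname{disc}(j^3\phi)$ flips the sign.
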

 Here, $j^k\phi$ means the $k$-jet of $\phi$ at $\zv$.
\begin{proof}
 The number $\Delta_{\phi}$ is the discriminant of
 the cubic equation $j^3\phi(u,1)=0$ with respect to $u$. 
 A standard method yields that $\Delta_{\phi}>0$ 
 \resp{$\Delta_{\phi}<0$} is the necessary and
 sufficient condition for $j^3\phi$ to be
 right equivalent to $u^3+ uv^2$ \resp{$u^3- uv^2$}.
 It is known that if $j^3\phi$
 is right equivalent to $u^3\pm uv^2$,
 then $\phi$ is right equivalent to $u^3\pm uv^2$.
 Thus we have the lemma.
\end{proof}
\mycomment{Fukui and Nu\~no-Ballesteros also obtained the same condition
that a function which is
 right equivalent to $u^3\pm uv^2$ (\cite{nbf}).}

\subsection{Versal unfoldings and their discriminant sets}
Let us define a function
\begin{equation}
\label{eq:typversal}
{\mathcal V}(u,v,x,y,z,t)=u^3\pm uv^2+u^2t+ux+vy+z.
\end{equation}
Then ${\mathcal V}$ is a ${\mathcal K}$-versal unfolding
of ${\mathcal V}(u,v,0,0,0,0)=u^3\pm uv^2$.
By definition, any ${\mathcal K}$-versal unfolding
of a function which is right equivalent to 
$u^3\pm uv^2$ is $P$-${\mathcal K}$-equivalent to
${\mathcal V}$. 
See \cite[Section 8]{AGV} for 
the definitions of unfoldings,
their ${\mathcal K}$-versality
and
$P$-${\mathcal K}$-equivalence between them.
See also \cite[Section 7]{horo}.
An unfolding $G(u,v,\x):(\R^2\times \R^k,(\zv,\zv))\to(\R,0)$
of a function $g:(\R^2,\zv)\to(\R,0)$
is a {\em Morse family of hypersurfaces\/}
if the map
$(G,G_u,G_v)
:
(\R^2\times\R^k,(\zv,\zv))\to(\R^3,\zv)$
is a submersion.
Moreover, the {\em discriminant set\/} ${\mathcal D}_{G}$
of a Morse family of hypersurfaces
$G$ is defined by
$$
{\mathcal D}_G=\{\x\in\R^k;
\text{\ there\ exists\ }(u,v)\in\R^2\text{\ such\ that\ }
G=G_u=G_v=0\text{\ at\ }(u,v,\x)\}.
$$
If two Morse families of hypersurfaces
$G_1, G_2 : (\R^2\times\R^k,\zv) \to  (\R,0)$
satisfy
that both regular sets
of their discriminant sets
are dense in $(\R^k,\zv)$, then
$G_1$ and $G_2$  are  $P$-${\mathcal K}$-equivalent
if and only if
the discriminant sets
$({\mathcal D}_{G_1},\zv)$ and $({\mathcal D}_{G_2},\zv)$ 
are diffeomorphic as set-germs
(see \cite[Section 1.1]{zaka},
see also \cite[Appendix A]{krsuy}).

The discriminant set
of ${\mathcal V}$ is given by 
$$
{\mathcal D}_{{\mathcal V}}=
\{(x,y,z,t);
x=-3u^2\mp v^2-2ut,\,
y=\mp 2uv,\,
z=2u^3+u^2t\pm2uv^2\}.
$$
This gives 
a parameterization of a $4$-dimensional $D_4^\pm$ singularity.

Thus, in order to show Theorem \ref{thm:r4},
it is sufficient to construct a function $\phi$ and an 
unfolding $\Phi$ of $\phi$,
which is a Morse family of hypersurfaces,
such that the discriminant set coincides with
the image of $f$, and
show that
$\phi$ is right equivalent to $u^3\pm uv^2$
and $\Phi$ is a ${\mathcal K}$-versal unfolding.

\subsection{Unfolding of a given front}
Let $f:(\R^3,\zv)\to(\R^4,\zv)$ be a front and $\nu$ its unit
normal vector.
We assume that
conditions (1), (2) and (3) of Theorem \ref{thm:r4}  are satisfied.
In particular,
$\rank df_{\zv}=1$ holds,
so by the implicit function theorem and
by coordinate transformations on the source and target,
$f$ can be written as
$$
f(u,v,t)=(f_1(u,v,t),f_2(u,v,t),f_3(u,v,t),t)
,\quad d(f_1,f_2,f_3)_{\zv}=\zv,
$$
where $\partial/\partial u$ and $\partial/\partial v$
generate the kernel of $df_{\zv}$,
and $\nu(\zv),\nu_u(\zv),\nu_v(\zv)$ are orthonormal.
Now we define functions
$\Phi$ and $\varphi$ by
\begin{equation*}
 \label{eq:phi}
\begin{array}{rcl}
 \Phi(u,v,x,y,z,t)
 &=&
 \inner{
 \big(f_1(u,v,t),f_2(u,v,t),f_3(u,v,t),t\big)
 -
 (x,y,z,t)
 }
 {\nu(u,v,t)}\\[1mm]
 &=&
 \inner{\big(f_1-x,f_2-y,f_3-z,0\big)}{\nu},\\
\phi(u,v)&=&\Phi(u,v,0,0,0,0).
\end{array}
\end{equation*}
\begin{lemma}
 The discriminant set ${\mathcal D}_\Phi$ of\/ $\Phi$ coincides
 with
 the image of $f$.
\end{lemma}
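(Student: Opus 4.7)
The plan is to prove the set equality $\mathcal{D}_\Phi=\operatorname{image}(f)$ as two inclusions, each reducing via a short calculation to the Legendrian identity
\[
\inner{df(\partial/\partial u)}{\nu}=\inner{df(\partial/\partial v)}{\nu}\equiv 0
\]
together with the specific form $f=(f_1,f_2,f_3,t)$.

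The first step is to simplify the defining equations of $\mathcal{D}_\Phi$. Setting $w(u,v,x,y,z,t):=(f_1-x,\,f_2-y,\,f_3-z,\,0)$ so that $\Phi=\inner{w}{\nu}$, the product rule combined with the vanishing of the fourth slot of $df(\partial/\partial u)=(f_{1u},f_{2u},f_{3u},0)$ and the Legendre identity gives $\Phi_u=\inner{w}{\nu_u}$, and similarly $\Phi_v=\inner{w}{\nu_v}$. Hence $\Phi=\Phi_u=\Phi_v=0$ is equivalent to $w\perp\{\nu,\nu_u,\nu_v\}$.

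From here, the inclusion $\operatorname{image}(f)\subset\mathcal{D}_\Phi$ is immediate: for $(x,y,z,t)=f(u_0,v_0,t_0)$, choosing the witness $(u,v)=(u_0,v_0)$ makes $w$ vanish identically, so all three equations hold. For the reverse inclusion, I would take $(x,y,z,t)\in\mathcal{D}_\Phi$ with witness $(u,v)$ near $\zv$; then $w\in\R^3\times\{0\}$ is orthogonal to $\nu(u,v,t),\nu_u(u,v,t),\nu_v(u,v,t)$. At $\zv$ these three are orthonormal by the normalization of Section~2, hence linearly independent in $\R^4$ in a neighborhood of $\zv$; the crucial additional observation is that their projections to $\R^3\times\{0\}$ are also linearly independent, so that the only vector of $\R^3\times\{0\}$ orthogonal to the triple is $0$. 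This forces $w=0$, i.e.\ $(x,y,z)=(f_1,f_2,f_3)(u,v,t)$, and since the fourth coordinates match tautologically we get $(x,y,z,t)=f(u,v,t)\in\operatorname{image}(f)$.

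The main obstacle is precisely the transversality step: showing that $\e_4\notin\operatorname{span}\{\nu(\zv),\nu_u(\zv),\nu_v(\zv)\}$. Modulo this, everything else is a routine application of Legendre and the product rule. I expect the non-containment to be a consequence of the coordinate normalization of Section~2 (the freedom in choosing the target diffeomorphism, after arranging the form $f=(f_1,f_2,f_3,t)$ and the orthonormality of the triple, is enough to realize it), so no further conditions from Theorem~\ref{thm:r4} are really needed at this step.
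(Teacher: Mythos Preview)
Your plan is correct and matches the paper's proof almost exactly: the paper likewise reduces the hard inclusion to the fact that $f(u,v,t)-(x,y,z,t)$ is orthogonal to $\nu,\nu_u,\nu_v$ and to $\e_4$, and then invokes the linear independence of $\{\e_4,\nu,\nu_u,\nu_v\}$ near $\zv$, which is equivalent to your projection condition. The paper's justification of that independence is the single clause ``$f_t(\zv)=\e_4$ holds,'' so the step you flag as the main obstacle is handled just as tersely there.
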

\begin{proof}
 It is easy to check that
 the image of $f$ is contained in ${\mathcal D}_\Phi$.
 We now show $\image f\supset {\mathcal D}_\Phi$.
 Set
 $\vect{w}=(x,y,z,t)\in {\mathcal D}_\Phi$.
 Since $\Phi_u=\inner{f_u}{\nu}+\inner{f-\vect{w}}{\nu_u}
 =\inner{f-\vect{w}}{\nu_u}$
 holds,
 $\vect{w}\in {\mathcal D}_\Phi$ is equivalent to
 existence of $(u,v)$ such that
 $\inner{f(u,v,t)-\vect{w}}{\nu(u,v,t)}=0$,
 $\inner{f(u,v,t)-\vect{w}}{\nu_u(u,v,t)}=0$ and 
 $\inner{f(u,v,t)-\vect{w}}{\nu_v(u,v,t)}=0$.
 Moreover, since $\inner{f(u,v,t)-\vect{w}}{\e_4}=0$ and
 $f_t(\zv)=\e_4$ hold, the four vectors $\{\e_4,\nu,\nu_u,\nu_v\}$
 are linearly independent near $\zv$,
 where $\e_4=(0,0,0,1)$.
 Thus it follows that $f(u,v,t)-\vect{w}=\zv$ for some $(u,v)$.
 Hence we have $\vect{w}\in\image f$.
\end{proof}
Since we assume that $f$ is a front, $\Phi$ is a Morse family
of hypersurfaces.
By the assumption $\det(\Hess\lambda)\ne0$,
the regular points of $f$ are dense in $(\R^3,\zv)$.
\subsection{Right equivalence of $\phi$ and $u^3\pm uv^2$}
Let $f:(\R^3,\zv)\to(\R^4,\zv)$ be a front
and assume that the conditions $(1),(2)$ and $(3)$ of
Theorem \ref{thm:r4} are satisfied.
Let $\Phi$ and $\phi$ be as above. Here, we prove that
if $\det(\Hess\lambda(\zv))>0$ \resp{$\det(\Hess\lambda(\zv))<0$}
then
$\phi$ is right equivalent to $u^3- uv^2$ \resp{$u^3+uv^2$}.

Calculating the third order differentials of $\phi$,%
\mycomment{
Since
$$
\begin{array}{rcl}
 \phi_u&=&\inner{f_u}{\nu}+\inner{f-\vect{w}}{\nu_u}
  =\inner{f-\vect{w}}{\nu_u},\\
 \phi_{uu}&=&\inner{f_u}{\nu_u}+\inner{f-\vect{w}}{\nu_{uu}},\\
 \phi_{uuu}&=&\inner{f_{uu}}{\nu_u}+2\inner{f_u}{\nu_{uu}}+
  \inner{f-\vect{w}}{\nu_{uuu}},\\
 \phi_{uuv}&=&
  \inner{f_{uv}}{\nu_u}+\inner{f_u}{\nu_{uv}}
  +\inner{f_v}{\nu_{uu}}+\inner{f-\vect{w}}{\nu_{uuv}},\\
 \phi_{uv}&=&\inner{f_v}{\nu_u}+\inner{f-\vect{w}}{\nu_{uv}},\\
 \phi_v&=&\inner{f_v}{\nu}+\inner{f-\vect{w}}{\nu_v}=
  \inner{f-\vect{w}}{\nu_v},\\
 \phi_{vv}&=&\inner{f_v}{\nu_v}+\inner{f-\vect{w}}{\nu_{vv}},\\
 \phi_{uvv}&=&\inner{f_{uv}}{\nu_v}+\inner{f_v}{\nu_{uv}}+
  \inner{f_u}{\nu_{vv}}+\inner{f-\vect{w}}{\nu_{uuv}},\\
 \phi_{vvv}&=&\inner{f_{vv}}{\nu_v}+2\inner{f_v}{\nu_{vv}}
  +\inner{f-\vect{w}}{\nu_{vvv}},
\end{array}
$$
it holds that} we have
$$
\begin{array}{l}
 \phi_u=
 \phi_{uu}=
 \phi_{uv}=
 \phi_v=
 \phi_{vv}=0,\\
 \phi_{uuu}=\inner{f_{uu}}{\nu_u},\quad
 \phi_{uuv}=
  \inner{f_{uv}}{\nu_u},\ 
 \phi_{uvv}=\inner{f_{uv}}{\nu_v},\ 
 \phi_{vvv}=\inner{f_{vv}}{\nu_v}
\end{array}
$$
at $\zv$.
Also, $\inner{f_u}{\nu}=\inner{f_v}{\nu}=0$ holds
identically,
and taking derivatives of this, we have
\begin{equation}
 \label{eq:itti}
  \begin{array}{ll}
   \inner{f_{uuu}}{\nu}=-2\inner{f_{uu}}{\nu_u}&=-2\phi_{uuu},\\
   \inner{f_{uuv}}{\nu}=-2\inner{f_{uv}}{\nu_u}
    =-2\inner{f_{uu}}{\nu_v}&=-2\phi_{uuv},\\
   \inner{f_{uvv}}{\nu}=-2\inner{f_{uv}}{\nu_v}=
    -2\inner{f_{vv}}{\nu_u}&=-2\phi_{uvv},\\
   \inner{f_{vvv}}{\nu}=-2\inner{f_{vv}}{\nu_v}&=-2\phi_{vvv}
  \end{array}
\end{equation}
at $\zv$.
On the other hand,
$\{f_t,\nu_u,\nu_v,\nu\}$ are linearly independent near $\zv$,
so there exist functions $a_i,b_i,c_i$ $(i=1,2,3,4)$ such that
$$
\begin{array}{rcl}
 f_{uu}&=&a_1f_t+a_2\nu_u+a_3\nu_v+a_4\nu,\\
 f_{uv}&=&b_1f_t+b_2\nu_u+b_3\nu_v+b_4\nu,\\
 f_{uu}&=&c_1f_t+c_2\nu_u+c_3\nu_v+c_4\nu.
\end{array}
$$
Note that by (\ref{eq:itti}), $a_3(\zv)=b_2(\zv)$ and $b_3(\zv)=c_2(\zv)$ hold.
By a direct calculation,
\begin{equation}
\label{eq:lambdadd}
\lambda_{uu}=a_2b_3-a_3b_2,\ 
\lambda_{uv}=a_2c_3-a_3c_2,\ 
\lambda_{vv}=b_2c_3-b_3c_2
\end{equation}
and
\begin{equation}
\label{eq:atophi}
\begin{array}{l}
 a_2=\inner{f_{uu}}{\nu_u}=\phi_{uuu},\qquad
 a_3=b_2=\inner{f_{uu}}{\nu_v}=\phi_{uuv},\\
 \hspace{20mm}b_3=c_2=\inner{f_{uv}}{\nu_v}=\phi_{uvv},\qquad
 c_3=\inner{f_{vv}}{\nu_v}=\phi_{vvv}
\end{array}
\end{equation}
hold at $\zv$.
By 
\eqref{eq:cubdiscri}, \eqref{eq:itti}, \eqref{eq:lambdadd}
and \eqref{eq:atophi}, we have
$$
\det\big(
\Hess_{(\partial/\partial u,\partial/\partial v)}\lambda(\zv)\big)
=-
\Delta_{\phi}.
$$
This proves the assertion.
\subsection{Versality of $\Phi$}
Here, we prove that $\Phi$ is a ${\mathcal K}$-versal
unfolding of $\phi$.

Recall that an unfolding $\Phi(u,v,x,y,z,t)$ of $\phi(u,v)$
is ${\mathcal K}$-versal if
the following equality holds 
(\cite{marti}, see also \cite[Appendix]{izkos}):
\mycomment{
$$
{\mathcal E}_2
=
\left\langle\frac{\partial\phi}{\partial u},
 \frac{\partial\phi}{\partial v},\phi
 \right\rangle_{{\mathcal E}_2}
+V_\Phi,\quad
\left(
V_\Phi
=
\left\langle
\frac{\partial \Phi}{\partial X},
\frac{\partial \Phi}{\partial Y},
\frac{\partial \Phi}{\partial Z},
\frac{\partial \Phi}{\partial T}
\right\rangle_{\!\!\R}
{(u,v,0,0,0,0)}
\right).
$$
}
\begin{equation}
\label{eq:versalcond}
{\mathcal E}_2
=
\left\langle\phi_u,
 \phi_v,\phi
 \right\rangle_{{\mathcal E}_2}
+V_\Phi,
\end{equation}
where
${\mathcal E}_2$ is the local ring of function-germs
$(\R^2,\zv )\to \R$
with the unique maximal 
ideal
${\mathcal M}_2=\{h\in {\mathcal E}_2;h(0)=0\}$,
and
$\langle\phi_u,\phi_v,\phi\rangle_{{\mathcal E}_2}$
is
the ideal generated by $\phi_u$, $\phi_v$ and $\phi$
in ${\mathcal E}_2$.
Moreover, $V_{\Phi}$ is the vector subspace of $\E_2$
generated by
$\Phi_x(u,v,\zv)$,
$\Phi_y(u,v,\zv)$,
$\Phi_z(u,v,\zv)$ and
$\Phi_t(u,v,\zv)$ over $\R$.\mycomment{ i.e.,
$$
V_{\Phi}=
\left\langle
 \Phi_x(u,v,\zv),
\Phi_y(u,v,\zv),
\Phi_z(u,v,\zv),
\Phi_t(u,v,\zv)\right\rangle_{\R}.
$$}
We set $\nu=(\nu_1,\nu_2,\nu_3,\nu_4)$.
Then we see that
$
\Phi_x=\nu_1,
\Phi_y=\nu_2,
\Phi_z=\nu_3
$.
Since $f$ is a front,
$
df_{\zv}(\partial/\partial u)=
df_{\zv}(\partial/\partial v)=\zv
$
and $\nu_4(\zv)=\zv$, and
$$
\left\langle \nu_1(u,v,0),\nu_2(u,v,0),\nu_3(u,v,0)
\right\rangle_{\R}\supset
\R\oplus u\R\oplus v\R
$$
holds,
where
$\langle \nu_1,\nu_2,\nu_3\rangle_{\R}$
means the vector space generated by 
$\nu_1,\nu_2,\nu_3$ over $\R$.
On the other hand,
$\phi$ is right equivalent to $u^3\pm uv^2$.
It is known that if a function
$\phi$ is right equivalent to $u^3\pm uv^2$,
then
$
({\mathcal M}_2)^3\subset {\mathcal M}_2\left\langle
\phi_u,\phi_v\right\rangle_{{\mathcal E}_2}
$
is satisfied.
We set
$$\psi_1(u,v)=\Phi_t(u,v,\zv),\qquad
\psi_2(u,v)=\phi_u(u,v),\qquad
\psi_1(u,v)=\phi_v(u,v).$$
Since ${\mathcal M}_2\left\langle
\phi_u,\phi_v\right\rangle_{{\mathcal E}_2}
\subset \left\langle
\phi_u,\phi_v,\phi\right\rangle_{{\mathcal E}_2}
$ holds, for showing \eqref{eq:versalcond}
it suffices to prove that
$$
\left\langle
\psi_1(u,v),
\psi_2(u,v),
\psi_3(u,v)
\right\rangle_{\R}
\supset
u^2\R\oplus uv\R\oplus v^2\R.
$$
This is equivalent to
\begin{equation*}
 \label{eq:versal1}
  \det\pmt{
  (\psi_1)_{uu}&(\psi_1)_{uv}&(\psi_1)_{vv}\\
 (\psi_2)_{uu}&(\psi_2)_{uv}&(\psi_2)_{vv}\\
 (\psi_3)_{uu}&(\psi_3)_{uv}&(\psi_3)_{vv}\\
 }(\zv)
  =
  \det\pmt{
  (\psi_1)_{uu}&(\psi_1)_{uv}&(\psi_1)_{vv}\\
 \phi_{uuu}&\phi_{uuv}&\phi_{uvv}\\
 \phi_{uuv}&\phi_{uvv}&\phi_{vvv}\\
 }(\zv)
  \ne0.
\end{equation*}
Since
$
\psi_1(u,v)
=
-\nu_4(u,v,0)+\inner{f}{\nu_t}(u,v,0)
$ holds, we have
\begin{equation}
\label{eq:psi1}
\begin{array}{l}
 (\psi_1)_{uu}=-\inner{\nu_{uu}}{f_t}+\inner{f_{uu}}{\nu_t},\\
 (\psi_1)_{uv}=-\inner{\nu_{uv}}{f_t}+\inner{f_{uv}}{\nu_t},\\
 (\psi_1)_{vv}=-\inner{\nu_{vv}}{f_t}+\inner{f_{vv}}{\nu_t}
\end{array}
\end{equation}
at $\zv$.
Taking derivatives of
 $\inner{f_t}{\nu}=\inner{f_u}{\nu}=\inner{f_v}{\nu}=0$,
we have
\begin{equation}
\label{eq:ident}
\begin{array}{l}
 -\inner{\nu_{uu}}{f_t}+\inner{f_{uu}}{\nu_t}=\inner{f_{tu}}{\nu_u},\\
 -\inner{\nu_{uv}}{f_t}+\inner{f_{uv}}{\nu_t}=
  (1/2)(\inner{f_{tu}}{\nu_v}+\inner{f_{tv}}{\nu_u})
  =\inner{f_{tu}}{\nu_v}=\inner{f_{tv}}{\nu_u},\\
 -\inner{\nu_{vv}}{f_t}+\inner{f_{vv}}{\nu_t}=\inner{f_{tv}}{\nu_v}
\end{array}
\end{equation}
at $\zv$. Hence by \eqref{eq:psi1} and \eqref{eq:ident},
it holds that
$$
 \pmt{
 (\psi_1)_{uu}&(\psi_1)_{uv}&(\psi_1)_{vv}\\
 \phi_{uuu}&\phi_{uuv}&\phi_{uvv}\\
 \phi_{uuv}&\phi_{uvv}&\phi_{vvv}\\
 }(\zv)
 =
 \pmt{
 \inner{f_{tu}}{\nu_u}&\inner{f_{tu}}{\nu_v}&\inner{f_{tv}}{\nu_v}\\
 \inner{f_{uu}}{\nu_u}&\inner{f_{uv}}{\nu_u}&\inner{f_{uv}}{\nu_v}\\
 \inner{f_{uv}}{\nu_u}&\inner{f_{uv}}{\nu_v}&\inner{f_{vv}}{\nu_v}
 }(\zv).
$$
This is the Jacobi matrix of
the map
$$
\big(\inner{f_u}{\nu_u},
\inner{f_u}{\nu_v},
\inner{f_v}{\nu_v}\big)
(t,u,v):\R^3\to\R^3.
$$
This proves the assertion.

\section{Proof of Theorem \ref{thm:r3}}
To prove criteria for the three dimensional case,
we first show the following lemma.
\begin{lemma}
 \label{lem:key} 
 Let\/
 $
 G(u,v,x,y,z):(\R^2\times\R^3,(\zv,\zv))\to(\R,0)
 $
 be an unfolding of a function\/
 $u^3+uv^2$ \resp{$u^3-uv^2$}.
 Suppose that\/ $G$ is a Morse family of hypersurfaces and 
 that the regular set of its discriminant set is dense in\/ $(\R^2,\zv)$.
 Then\/ $G$ is\/ $P$-${\mathcal K}$ equivalent to
 $$
 {\mathcal V}_0(u,v,x,y,z)=u^3+uv^2+xu+yv+z,\quad
 (\text{resp.}\ {\mathcal V}_0=u^3- uv^2+xu+yv+z).
 $$
\end{lemma}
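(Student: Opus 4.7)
My plan is to compare $G$ with the standard $4$-parameter $\mathcal{K}$-versal unfolding $\mathcal{V}(u,v,x,y,z,t)=u^{3}\pm uv^{2}+u^{2}t+xu+yv+z$ of $\phi=u^{3}\pm uv^{2}$ from \eqref{eq:typversal}, reduce to a $u^{2}$-perturbation of $\mathcal{V}_{0}$, and then eliminate it using the classification of Morse families by their discriminant set germs cited before \eqref{eq:typversal}.

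First, I would recall that $\mathcal{V}$ is $\mathcal{K}$-versal since $V_{\mathcal{V}}=\R\{u,v,1,u^{2}\}$ projects to a basis of the $4$-dimensional local algebra $\E_{2}/\langle\phi_{u},\phi_{v},\phi\rangle_{\E_{2}}$ (the standard basis for a $D_{4}^{\pm}$ critical point). By the universal property of $\mathcal{K}$-versal unfoldings, $G$ is $P$-$\mathcal{K}$-equivalent to an induced unfolding $\Psi^{*}\mathcal{V}$ for some smooth map germ $\Psi=(\Psi_{1},\Psi_{2},\Psi_{3},\Psi_{4})\colon(\R^{3},\zv)\to(\R^{4},\zv)$. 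Because the Morse family property is preserved under $P$-$\mathcal{K}$-equivalence, $\Psi^{*}\mathcal{V}$ is again a Morse family; writing out the submersion condition at $\zv$ shows that $(\Psi_{1},\Psi_{2},\Psi_{3})$ is a local diffeomorphism of $(\R^{3},\zv)$. Using its inverse as a change of parameter, we may assume $\Psi(x,y,z)=(x,y,z,H(x,y,z))$ with $H(\zv)=0$, reducing the lemma to the claim $\mathcal{V}_{0}+u^{2}H(x,y,z)\sim_{P\mathcal{K}}\mathcal{V}_{0}$.

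To establish this claim, I would pass to the discriminant side. The implicit function theorem, applied to the Morse family equations $G=G_{u}=G_{v}=0$, parametrizes $\mathcal{D}_{G}$ by a map germ $F(u,v)=(x(u,v),y(u,v),z(u,v))$. Differentiating these identities at $\zv$ yields $dF(\zv)=\zv$, and a further round of differentiation, using the $3$-jet of $\phi$ and the inverse of the Morse family Jacobian matrix, produces an explicit expression for $j^{3}F$. The computation shows that $j^{3}F$ is linearly equivalent to the $3$-jet of the standard $D_{4}^{\pm}$ normal form $(u,v)\mapsto(uv,u^{2}+3\varepsilon v^{2},u^{2}v+\varepsilon v^{3})$, independently of $H$. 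By the classical finite $\mathcal{A}$-determinacy of $D_{4}^{\pm}$ map germs $(\R^{2},\zv)\to(\R^{3},\zv)$ (Arnol'd), $F$ is $\mathcal{A}$-equivalent to the $D_{4}^{\pm}$ normal form, so $\mathcal{D}_{G}$ and $\mathcal{D}_{\mathcal{V}_{0}}$ are diffeomorphic as set germs in $(\R^{3},\zv)$.

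Combining this with the dense-regular-discriminant hypothesis on $G$ and the corresponding density for $\mathcal{V}_{0}$, the classification theorem from \cite{zaka} (see also \cite{krsuy}) cited before \eqref{eq:typversal} --- two Morse families of hypersurfaces with diffeomorphic discriminant set germs and dense regular discriminants are $P$-$\mathcal{K}$-equivalent --- yields $G\sim_{P\mathcal{K}}\mathcal{V}_{0}$. The main obstacle lies in the third step: the computation of $j^{3}F$ is routine but bookkeeping-heavy, and the appeal to finite $\mathcal{A}$-determinacy of $D_{4}^{\pm}$ in the $(\R^{2},\R^{3})$ setting must be justified independently of Theorem \ref{thm:r3} (which in this paper depends on the present lemma) to avoid circularity.
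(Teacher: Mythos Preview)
Your first two steps coincide with the paper's: induce $G$ from the $4$-parameter $\mathcal{K}$-versal unfolding $\mathcal{V}$, then use the Morse-family condition to normalize the inducing map and reduce to $G_2=\mathcal{V}_0+u^{2}H(x,y,z)$. The divergence is in how the $u^{2}H$ term is removed. Rather than parametrizing $\mathcal{D}_{G_2}$ and appealing to $\mathcal{A}$-determinacy of the $D_4^{\pm}$ map germ, the paper embeds $G_2$ as the slice $t=0$ of $\overline{G}=\mathcal{V}_0+(t-H(x,y,z))u^{2}$, which is the pullback of $\mathcal{V}$ under the parameter diffeomorphism $(t,x,y,z)\mapsto(t-H,x,y,z)$. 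Zakalyukin's lemma \cite[Theorem~1.4]{zaka} then supplies a self-diffeomorphism of $(\R^{4},\zv)$ preserving $\mathcal{D}_{\mathcal{V}}$ and straightening $t-H$ to $t$; composing, one obtains a diffeomorphism carrying $\mathcal{D}_{\mathcal{V}}$ to $\mathcal{D}_{\overline{G}}$ that is fibered over the $t$-axis, so its restriction to $\{t=0\}$ gives $\mathcal{D}_{\mathcal{V}_0}\cong\mathcal{D}_{G_2}$ directly. Both proofs then conclude with the same Morse-family classification theorem from \cite{zaka}. The paper's route thus trades your jet computation and external determinacy input for a structural fact about symmetries of the ambient bifurcation diagram.

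Your alternative is viable, and the circularity worry is misplaced: finite $\mathcal{A}$-determinacy of the $D_4^{\pm}$ map germ $(\R^{2},\zv)\to(\R^{3},\zv)$ is a classical fact independent of Theorem~\ref{thm:r3}. One claim does need repair, however. The full $3$-jet of your parametrization $F$ is \emph{not} independent of $H$: solving $G_2=(G_2)_u=(G_2)_v=0$ gives $x=-3u^{2}\mp v^{2}-2uH(x,y,z)$, and with $H=ax+by+cz+O(2)$ the term $-2uH$ contributes $6au^{3}\pm4bu^{2}v\pm2auv^{2}$ to $j^{3}x$. So $j^{3}F$ is not merely \emph{linearly} equivalent to the normal form. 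What \emph{is} independent of $H$ is the weighted-homogeneous initial part of $F$ for source weights $(1,1)$ and target weights $(2,2,3)$, and the $D_4^{\pm}$ germ is determined by that initial part (equivalently, the extra cubics lie in the tangent space to the $\mathcal{A}$-orbit at the jet level). With this adjustment your argument goes through, though the bookkeeping is heavier than your sketch suggests.
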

\begin{proof}
 Since the unfolding
 ${\mathcal V}$ as in \eqref{eq:typversal}
  is a ${\mathcal K}$-versal unfolding of $u^3\pm uv^2$,
 there exists a map $(g_1,g_2,g_3,g_4):\R^3\to\R^4$ such that
 $G$ is $P$-${\mathcal K}$ equivalent to
 $$
 G_1=
 u^3\pm uv^2+g_1(x,y,z)u^2+g_2(x,y,z)u+g_3(x,y,z)v+g_4(x,y,z).
 $$
 Since the condition and the assertion of the lemma do not depend
 on the $P$-${\mathcal K}$ equivalence,
 we may suppose that $G$ is equal to $G_1$.
 Moreover, 
 $(g_2,g_3,g_4):\R^3\to\R^3$ is an immersion,
 because $G$ is 
 a Morse family of hypersurfaces.
 Thus $G$ is $P$-${\mathcal K}$ equivalent to 
 $$G_2=
 u^3\pm uv^2+g_1(x,y,z)u^2+xu+yv+z.
 $$
 Hence we may suppose $G$ is equal to $G_2$.
 Now we consider the following function
 $$
 \overline{G}(u,v,t,x,y,z)=u^3\pm uv^2+(t-g_1(x,y,z))u^2+xu+yv+z.
 $$
 The following 
 is the special case of Zakalyukin's lemma \cite[Theorem 1.4]{zaka}.
 Note that we are considering the ${\mathcal K}$-versal unfolding
  ${\mathcal V}$ as in \eqref{eq:typversal},
 the newtral subspace 
 of ${\mathcal V}$ is empty (see  \cite[Section 1.3]{zaka}).
 \begin{lemma}
  \label{lem:zakaone}
  For the\/ ${\mathcal K}$-versal unfolding\/
  ${\mathcal V}:(\R^2\times\R^4,\zv)\to(\R,0)$ as in\/ \eqref{eq:typversal},
  and a function\/ $\sigma:(\R^4,\zv)\to(\R,0)$ satisfying\/
  $\partial\sigma/\partial t(\zv)\ne0$,
  there exists a 
  diffeomorphism-germ\/ $\Theta:(\R^4,\zv)\to(\R^4,\zv)$
  such that\/
  $\Theta({\mathcal D}_{\mathcal V})=
  {\mathcal D}_{\mathcal V}$ and\/
  $\sigma\circ\Theta(t,x,y,z)=t$.
 \end{lemma}
 Let us continue the proof of Lemma \ref{lem:key}.
 Applying Lemma \ref{lem:zakaone} to $\sigma=t-g_1(x,y,z)$,
 there exists a diffeomorphism-germ $\Theta:\R^4\to\R^4$
 such that
 $\Theta({\mathcal D}_{{\mathcal V}})={\mathcal D}_{{\mathcal V}}$
 and
 $$
  (t-g_1(x,y,z))\circ\Theta=t.$$
 Let $\Psi:(\R^4,\zv)\to(\R^4,\zv)$ be a
 diffeomorphism-germ defined by $$
 \Psi(t,x,y,z)=(t-g_1(x,y,z),x,y,z).
 $$
 Then we have
 $
 {\mathcal V}\circ\Psi=\overline{G}.
 $
 We also define a diffeomorphism-germ by
 $\tilde{\Theta}=\Psi\circ\Theta$,
 and then 
 it holds that
 $$
 \tilde{\Theta}({\mathcal D}_{{\mathcal V}})
 =
 \Psi\circ\Theta({\mathcal D}_{{\mathcal V}})
 =
 \Psi({\mathcal D}_{{\mathcal V}})
 =
 {\mathcal D}_{\overline{G}}.
 $$
 Hence ${\mathcal D}_{\mathcal V}$ and ${\mathcal D}_{\overline{G}}$ 
 are diffeomorphic.
 On the other hand, defining the projection
 $\pi:\R^4\to\R$ as
 $\pi(t,x,y,z)=t$,
 we have
 $$
 \pi\circ\tilde{\Theta}
 =
 \pi\circ\Psi\circ\Theta
 =
 (t-g_1(x,y,z))\circ\Theta
 =
 t.
 $$
 Thus it holds that $\pi\circ\tilde{\Theta}=\pi$.
 Hence for each $t$, it holds that ${\mathcal D}_{\mathcal V}\cap\{t=0\}$
 and ${\mathcal D}_{\overline{G}}\cap\{t=0\}$ are also diffeomorphic.
 Since 
 $
 {\mathcal D}_{\overline{G}}\cap\{t=0\}
 =
 {\mathcal D}_{G}
 $
 and both the regular sets 
 of ${\mathcal D}_{\mathcal V}\cap\{t=0\}$ and ${\mathcal D}_{G}$
 are dense in $(\R^2,\zv)$,
 it follows
 that ${\mathcal V}(u,v,x,y,z,0)={\mathcal V}_0(u,v,x,y,z)$ and
 $G(u,v,x,y,z)$ 
 are $P$-${\mathcal K}$ equivalent.
\end{proof}

 Here, we calculate the discriminant set of ${\mathcal V}_0$:
 $$
 {\mathcal D}_{{\mathcal V}_0}=
 \{(x,y,z);
 x=-3u^2\mp v^2,\,y=\mp 2uv,\,z=2u^3\pm 2uv^2\}.
 $$
 This is 
 a parameterization of a $D_4^\pm$ singularity of a
 front.
By the same arguments as in Section 3,
for proving Theorem \ref{thm:r3} it suffies that we
construct a function $\phi$ and an unfolding $\Phi$
satisfying the conditions in Lemma \ref{lem:key}.

  \subsection{Unfolding of a given front}
Let $(f,\nu):\R^2\to\R^3$
be a front satisfying the conditions (a) and (b) of
Theorem \ref{thm:r3}.
Consider the maps
$$
\begin{array}{rcl}
\Phi(u,v,x,y,z)
&=&
\inner{\big(f_1(u,v),f_2(u,v),f_3(u,v)\big)-(x,y,z)}{\nu(u,v)}\\
 &=&
  \inner{(f_1-x,f_2-y,f_3-z)}{\nu},\\
\phi(u,v)&=&\Phi(u,v,0,0,0).
\end{array}
$$
By the same argument as in the case of $\R^4$, 
we see that the discriminant set ${\mathcal D}_\Phi$ coincides 
with the image of $f$.
Again by the same calculation as in the case of $\R^4$,
we can show that
$\phi$ is right equivalent to $u^3+ uv^2$ \resp{$u^3-uv^2$}
if and only if
$\det(\Hess\lambda(\zv))<0$ (resp. $\det(\Hess\lambda(\zv))>0$).
Since $f$ is a front,
an unfolding $\Phi(u,v,x,y,z)$ of $\phi$ 
is a Morse family of hypersurfaces.
 By the condition for the determinant of the Hessian matrix,
 the regular set of $f$ is dense in $(\R^2,\zv)$.
 Thus the regular set of ${\mathcal D}_\Phi$ is also dense.
 Hence by Lemma \ref{lem:key},
 $\Phi(u,v,x,y,z)$ is $P$-${\mathcal K}$-equivalent to ${\mathcal V}_0$.
 This proves Theorem \ref{thm:r3}.
\section{Examples}
\label{sing-curv}
Here we give two examples where the criteria
for typical $D_4^\pm$ singularities appear.
Let us consider the map
$
(u,v)\mapsto(uv, u^2+3v^2,u^2v+v^3).
$
This has a $D_4^+$ singularity at $\zv$.
Set $\nu=(2u,v,-2)/\delta$ $(\delta=(4u^2+v^2+4)^{1/2})$.
Then by \eqref{eq:signed-area-density}, we have
$
\lambda=(4u^2+4u^4-12v^2-11u^2v^2-3v^4)/\delta.
$
Thus we have $d\lambda(\zv)=\zv$ and
$$
\det(\Hess\lambda(\zv))=
\frac{1}{4}\det\pmt{8&0\\0&-24}<0.
$$

Now consider the map 
$
(u,v)\mapsto(uv, u^2-3v^2,u^2v-v^3).
$
This has a $D_4^-$ singularity.
Set $\nu=(2u,v,-2)/\delta$. Then we have
$
\lambda=(4u^2+4u^4+12v^2+13u^2v^2+3v^4)/\delta.
$
Thus we also have $d\lambda(\zv)=\zv$ and
$$
\det(\Hess\lambda(\zv))
=\frac{1}{4}\det\pmt{8&0\\0&24}>0.
$$
\section{Application}
In this section,
as an application of Theorem \ref{thm:r3},
we study the singular curvature of
cuspidal edges near a $D_4^+$ singularity in $\R^3$.
First, we give a brief review of the singular curvature of
cuspidal edges, as given in \cite{front}.
Let $f:(\R^2,\zv)\to(\R^3,\zv)$ be a cuspidal edge and
$\nu$ its unit normal vector.
Then 
there exists a regular curve $\gamma(t)$ passing through $\zv$.
Furthermore, we have a non-vanishing
vector field $\eta(t)$ along $\gamma(t)$
such that $\eta(t)\in\ker df_{\gamma(t)}$ and 
$\bigl(\gamma'(t),\eta(t)\bigr)$ is a positively oriented frame field
of $\R^2$ along $\gamma$.
Then the {\em singular curvature\/} $\kappa_s(t)$ of the cuspidal edge
$\gamma(t)$ is
defined as follows \cite{front}:
\begin{equation}\label{eq:def-singular-curvature}
 \kappa_s(t)=\sign\bigl(d\lambda(\eta)\bigr)\,
  \frac{\det\bigl(\hat \gamma'(t),\
  \hat \gamma''(t),\
  \nu\circ\gamma(t)\bigr)}
  {|\hat \gamma'(t)|^3},
\end{equation}
where $\hat \gamma(t)=f(\gamma(t))$ and ${}'=d/dt$.
For the geometric meaning of the singular curvature,
see \cite{front}.

There are four curves emanating from a $D_4^+$ singularity,
each consisting of a cuspidal edge
(see Figure \ref{fig:dfour}).
We study the properties of the singular curvatures of
these four curves.

Let $f:(\R^2,\zv)\to(\R^3,\zv)$ be a $D_4^+$ singularity.
Then by Theorem \ref{thm:r3},
we have a regular curve
$\gamma:((-\ep,\ep),0)\to(\R^2,\zv)$ such that
$\image\gamma\subset S(f)$.
Set $\hat\gamma(t)=f(\gamma(t))$.
Then the following proposition holds.
\begin{proposition}
 If
 $$
 \det(\hat\gamma''(0),\hat\gamma'''(0),\nu(\zv))\ne0
 $$
 holds, then the singular 
 curvature\/ $\kappa_s(t)$ of\/ $\gamma(t)$,
 approaching\/ $\gamma(0)$ from both sides,
 diverges. Moreover,
 it diverges with opposite sign on opposite sides.
\end{proposition}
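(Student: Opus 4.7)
My plan is to expand every ingredient of \eqref{eq:def-singular-curvature} in Taylor series at $t=0$ and isolate the two factors responsible for the divergence and the sign flip. Condition (a) of Theorem~\ref{thm:r3} gives $df_{\zv}=0$, so $\hat\gamma'(0)=df_{\zv}(\gamma'(0))=\zv$. From
\[
\hat\gamma'(t)=t\hat\gamma''(0)+\tfrac{t^2}{2}\hat\gamma'''(0)+O(t^3),\qquad
\hat\gamma''(t)=\hat\gamma''(0)+t\hat\gamma'''(0)+O(t^2),
\]
the multilinearity and antisymmetry of the determinant give
\[
\det\bigl(\hat\gamma'(t),\hat\gamma''(t),\nu\circ\gamma(t)\bigr)=\tfrac{t^2}{2}\det\bigl(\hat\gamma''(0),\hat\gamma'''(0),\nu(\zv)\bigr)+O(t^3),
\]
while $|\hat\gamma'(t)|^3=|t|^3|\hat\gamma''(0)|^3+O(t^4)$; the hypothesis forces $D:=\det(\hat\gamma''(0),\hat\gamma'''(0),\nu(\zv))\ne 0$ and $\hat\gamma''(0)\ne\zv$. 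The unsigned factor in \eqref{eq:def-singular-curvature} therefore equals $D/(2|t||\hat\gamma''(0)|^3)+O(1)$, which blows up like $1/|t|$ with a sign that is the \emph{same} on both sides.

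The sign flip must therefore come from $\sign(d\lambda(\eta(t)))$. By condition (b) of Theorem~\ref{thm:r3}, the origin is a non-degenerate saddle critical point of $\lambda$, so $d\lambda(\zv)=0$. Differentiating $\lambda\circ\gamma\equiv 0$ twice at $t=0$ yields $\Hess\lambda(\zv)(\gamma'(0),\gamma'(0))=0$, so $\gamma'(0)$ is a null vector of the indefinite form $\Hess\lambda(\zv)$. A first-order expansion of $d\lambda$ about $\zv$ then gives
\[
d\lambda|_{\gamma(t)}\bigl(\eta(t)\bigr)=t\,\Hess\lambda(\zv)\bigl(\gamma'(0),\eta(t)\bigr)+O(t^2).
\]

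The crux---and the main obstacle---is to verify that $\Hess\lambda(\zv)(\gamma'(0),\eta(t))$ stays bounded away from $0$ as $t\to 0^\pm$ and has the \emph{same} sign on the two sides, since otherwise the factor of $\sign(t)$ above could be cancelled or overwhelmed. Because $\Hess\lambda(\zv)$ is non-degenerate, the linear form $\Hess\lambda(\zv)(\gamma'(0),\cdot)$ is not identically zero, and its one-dimensional kernel contains the null vector $\gamma'(0)$, hence equals $\R\cdot\gamma'(0)$; this form therefore has one fixed sign on each of the two open half-planes cut out by the line $\R\cdot\gamma'(0)$. The positive orientation of the frame $(\gamma'(t),\eta(t))$ for both $t>0$ and $t<0$ forces the one-sided limits $\eta(0^+),\eta(0^-)$ to lie on the same (positive) side of $\R\cdot\gamma'(0)$; in particular both are transverse to $\gamma'(0)$, so $\Hess\lambda(\zv)(\gamma'(0),\eta(0^\pm))\ne 0$ with a common sign $s$. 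Therefore $\sign(d\lambda(\eta(t)))=\sign(t)\cdot s$ for $t$ close to $0$, which combined with the first paragraph shows that $\kappa_s(t)\to\pm\infty$ with opposite signs as $t\to 0^+$ and $t\to 0^-$.
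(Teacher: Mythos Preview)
Your overall strategy matches the paper's: split $\kappa_s$ into the sign prefactor $\sign(d\lambda(\eta))$ and the unsigned quotient, and show that the quotient diverges with equal signs on both sides while the prefactor flips. Your Taylor-expansion treatment of the quotient is a clean substitute for the paper's appeal to the normal form $(t^2,t^3,0)$ together with two applications of L'H\^opital, and it is correct.

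The weak point is the argument for the sign flip. You write that the positive orientation of $(\gamma'(t),\eta(t))$ ``forces the one-sided limits $\eta(0^+),\eta(0^-)$ to lie on the same (positive) side of $\R\cdot\gamma'(0)$; in particular both are transverse to $\gamma'(0)$''. This is circular: the orientation condition only places $\eta(t)$ in the \emph{open} half-plane determined by $\gamma'(t)$ for $t\ne 0$, so after passing to the limit you only know that any subsequential limit of the (normalized) null direction lies in the \emph{closed} half-plane of $\gamma'(0)$. You have not ruled out that the null direction degenerates to $\pm\gamma'(0)$ as $t\to 0$; if it did, $\Hess\lambda(\zv)(\gamma'(0),\eta(t))\to 0$ and your leading term would no longer control the sign. (In fact the limiting null direction along $\gamma$ is the tangent to the \emph{other} branch of $S(f)$, hence transverse to $\gamma'(0)$ by the saddle condition---but you have to say so.) You also tacitly assume that $\eta(0^\pm)$ exist at all, which needs justification since $\ker df_{\zv}$ is two-dimensional.

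The paper sidesteps this entirely with a geometric argument: because $\zv$ is a nondegenerate saddle of $\lambda$, the zero set $S(f)=\lambda^{-1}(0)$ consists locally of two transverse arcs cutting a neighbourhood of $\zv$ into four sectors with alternating sign of $\lambda$. The null vector $\eta(t)$, being on a fixed side of $\gamma$ by orientation, points into a region where $\lambda>0$ for $t$ on one side of $0$ and $\lambda<0$ for the other; hence $\sign(d\lambda(\eta))$ flips at $t=0$. This uses only the transversality of $\eta(t)$ to $\gamma$ for $t\ne 0$ (automatic at cuspidal edge points) and never needs the limit of $\eta$.
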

\begin{proof}
 Since $\hat\gamma$ at $0$ is right-left equivalent to
 the germ $(t^2,t^3,0)$ at $t=0$,
 the signs of
 $$
 \lim_{t\to+0}\det(\hat\gamma',\hat\gamma'',\nu(\gamma))
 ,\qquad
 \lim_{t\to-0}\det(\hat\gamma',\hat\gamma'',\nu(\gamma))
 $$
 are the same.
 On the other hand, by Theorem \ref{thm:r3},
 $\sign(d\lambda(\eta))$ when $t>0$ and when $t<0$
 are opposite to each other (see Figure \ref{fig:neard4}).
 Applying L'Hospital's rule to
 the formula (\ref{eq:def-singular-curvature}) twice,
 we have the conclusion.
\begin{figure}[htbp]
 \begin{center}
 \begin{picture}(298,109)(0,0)
  \put(98,72){\makebox{$\eta$}}%
  \put(220,30){\makebox{$\gamma(t)$}}%
  \put(148,37){\makebox{$\zv$}}%
  \put(80,22){\makebox{$S(f)$}}%
  \put(110,12){\makebox{$\lambda>0$}}%
  \put(230,72){\makebox{$\lambda>0$}}%
  \put(230,12){\makebox{$\lambda<0$}}%
  \put(110,72){\makebox{$\lambda<0$}}%
  \put(50,0){\includegraphics[width=.5\linewidth, bb=0 0 298 109]{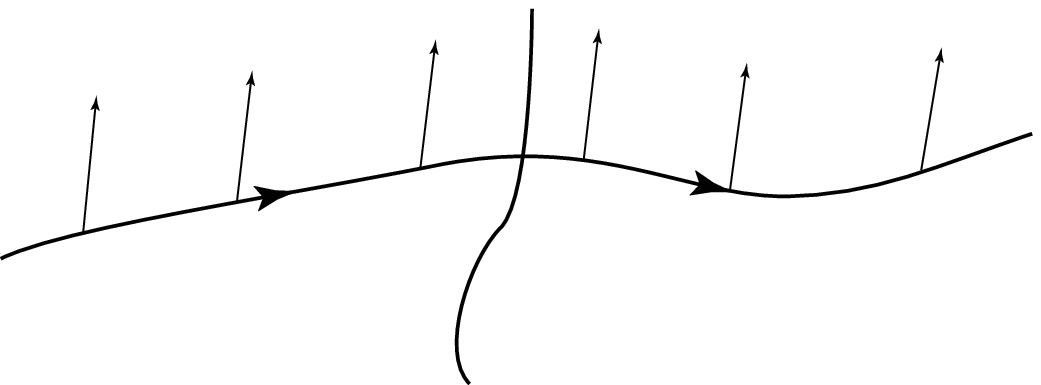}}
 \end{picture}
  \caption{Singular set and the null vector field.}
   \label{fig:neard4}
 \end{center}
\end{figure}
\end{proof}
Here we give an example applying this proposition:
\begin{example}
 The front-germ
  $f(u,v)=(u v,u^2+3 v^2,u^2 (1+v)+v^2 (3+v))$ at $\zv$ has a
 $D_4^+$ singularity.
 The front $f$  has two 
 curves $\gamma_{\pm}(t)=(\pm\sqrt{3}t,t)$
 passing through $\zv$, consisting of cuspidal edges.
 Set $\gamma(t)=\gamma_{+}(t)$ and
 $\hat\gamma(t)=f(\gamma(t))$.
 Then we have
 $\det(\hat\gamma''(0),\hat\gamma'''(0),\nu(\zv))=-24\sqrt{6}\ne0$.
 The singular curvature is calculated as follows:
 $$
 \kappa_s(t)=\sign(t)\frac{t^2 (2-11 t)}{|t^2 (25+24 t+12 t^2)|^{3/2}}.
 $$
 Thus $\lim_{t\to+0}\kappa_s(t)=+\infty$ and
 $\lim_{t\to-0}\kappa_s(t)=-\infty$
 hold (see Figure \ref{fig:d4pks}).
\end{example}
\begin{figure}[htbp]
 \includegraphics[width=.5\linewidth,bb=0 0 476 204]{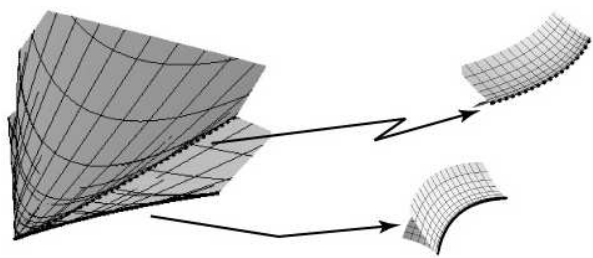}
\caption{The shapes of cuspidal edges near
 the $D_4^+$ singularity of $f$.}
 \label{fig:d4pks}
\end{figure}
\mycomment{
\begin{figure}[htbp]
 \begin{center}
 \begin{picture}(240,120)(0,0)
  \put(95,110){\makebox{$\kappa_s$}}%
  \put(98,75){\makebox{$\zv$}}%
  \put(220,75){\makebox{$t$}}%
  \put(0,0){\includegraphics[width=.5\linewidth, bb=0 0 360 180]
  {singcurveexgraph.eps}}
  \label{fig:singcurveexgraph}
   \end{picture}
  \caption{Behavior of singular curvature near
  the $D_4^+$ singularity of $f$}
 \end{center}
\end{figure}
}

\medskip
\begin{flushright}
 \begin{tabular}{l}
  Department of Mathematics,\\
  Faculty of Education, Gifu University\\
  Yanagido 1-1, Gifu, 501-1193, Japan\\
  e-mail: {\tt ksajiO\!\!\!agifu-u.ac.jp}
\end{tabular}\end{flushright}
\end{document}